\theoremstyle{plain}
\newtheorem{thm}{Theorem}[section]
\newtheorem{lem}[thm]{Lemma}
\newtheorem{cor}[thm]{Corollary}
\newtheorem{que}[thm]{Question}
\theoremstyle{definition}
\newtheorem{defn}[thm]{Definition}
\newtheorem{const}[thm]{Construction}
\newtheorem{remark}[thm]{Remark}
\begin{document}

\title[Simply Connected  3-manifolds with Rigid Genus One Ends]
{Simply Connected Open 3-manifolds with \\Rigid Genus One Ends}
\author{Dennis Garity}
\address{Mathematics Department, Oregon State University,
Corvallis, OR 97331, U.S.A.}
\email{garity@math.oregonstate.edu}
\urladdr{http://www.math.oregonstate.edu/\symbol{126}garity}

\author{Du\v{s}an Repov\v{s}}
\address{Faculty of Education,
and Faculty of Mathematics and Physics,
University of Ljubljana,  P.O.Box 2964,
Ljubljana, Slovenia}
\email{dusan.repovs@guest.arnes.si}
\urladdr{http://www.fmf.uni-lj.si/\symbol{126}repovs}

\author{David Wright}
\address{Brigham Young University,
Provo, UT 84602, U.S.A.}
\email{wright@math.byu.edu}
\urladdr{http://www.math.byu.edu/\symbol{126}wright}

\date{\today}

\subjclass[2010]{Primary 54E45, 57M30, 57N12; Secondary 57N10, 54F65}

\keywords{Open 3-manifold, rigidity,  genus one manifold end, wild Cantor set, Bing link, Whitehead link, defining sequence, Bing-Whitehead Cantor Set}

\begin{abstract} 
We construct uncountably many  simply connected open 3-manifolds with genus one ends 
homeomorphic to the Cantor set. Each constructed manifold has the property  that any self 
homeomorphism of the manifold (which necessarily extends to  a homeomorphism of the ends) 
fixes the ends pointwise.
These manifolds are complements of rigid generalized Bing-Whitehead (BW) Cantor sets. Previous 
examples of rigid Cantor sets with simply connected complement in $R^{3}$ had infinite genus and 
it was an open question as to whether finite genus examples existed. The examples here  exhibit 
the minimum possible genus, genus one. These rigid generalized BW Cantor sets are 
constructed using variable numbers of Bing and Whitehead links. Our previous result with \v{Z}eljko 
determining when BW Cantor sets are equivalently embedded in $R^{3}$ extends to 
the generalized construction. This characterization is used to prove rigidity and to  distinguish the  
uncountably many examples.
\end{abstract}
\maketitle


\section{Introduction}
\label{introsection}

Each Cantor set $C$ in $S^{3}$ has complement an open $3$-manifold with end set $C$. Properties of the embedding of the Cantor set give rise to properties of the corresponding complementary $3$-manifold. See \cite{SoSt12} for an example of this and \cite{Beco87} for another setting in which wild Cantor sets in $S^{3}$ arise.

In \cite{GaReZe06} new examples  were constructed of Cantor sets that were both rigidly embedded and had simply connected complement. Prior to that, only examples having one  of these properties were known. The Antoine type \cite{An20} rigid Cantor sets \cite{Shi74} failed to have simply connected complement. Their rigidity was a consequence of Sher's result \cite{Sh68} that if two Antoine Cantor sets are equivalently embedded, then the stages in the defining sequences must match up exactly. 

The  new examples \cite{GaReZe06} had points in the Cantor set of arbitrarily large local genus and thus the complementary $3$-manifold did not have finite genus at infinity. The rigidity here was a consequence of a dense set of points in the Cantor set having distinct local genera. The simple connectivity of the complement was a consequence of a Skora type \cite{Sk86} construction for the stages. The question arose as to whether finite genus examples possessing both properties were possible. For any such examples, rigidity would have to be determined in some other manner since local genus could no longer be used.

In this paper, we construct examples of Cantor sets that are rigid, have simply connected complement,  and have local (and global) genus one (See Theorem \ref{main theorem}). These examples exhibit the minimal possible genus. For the corresponding statement concerning simply connected open $3$-manifolds of genus one at infinity with rigid end structure, see Theorem \ref{maintheorem3}. 

In Section \ref{backgroundsection}, we give definitions and the basic results needed for working with generalized Bing-Whitehead (BW) Cantor sets. In the following section, Section \ref{generalizedsection}, we generalize the results from our previous paper with \v{Z}eljko \cite{GaReWrZe11} that are needed in the new setting. In Section \ref{mainresultssection} we state and prove the main results about Cantor sets. In Section \ref{3manifoldsection} we state the corresponding results for the complementary $3$-manifolds. Section \ref{question section} lists some remaining questions.

\section{Preliminaries}
\label{backgroundsection}

\subsection{Background information}
Refer to \cite{DeOs74}, \cite{Wr89} and \cite{GaReWrZe11} for results about BW Cantor sets, to 
\cite{GaReZe05, GaReZe06} for a discussion of Cantor sets in general and rigid Cantor sets, and to \cite{Ze05} for results about local genus of points in Cantor sets and defining sequences for Cantor sets. See \cite{CaMeZa12} for results about dynamics of self-maps of the Cantor set. The bibliographies in these papers contain additional references to results about Cantor sets.
For background on Freudenthal compactifications and theory of ends, see \cite{Fr42}, \cite{Di68}, and \cite{Si65}.

Two Cantor sets $X$ and $Y$ in $S^3$ are  \emph{equivalent} if there is
a self homeomorphism of $S^3$  taking $X$ to $Y$. If there is no such homeomorphism, the Cantor sets are said to be \emph{inequivalent}, or \emph{inequivalently embedded}. A Cantor set $C$ is \emph{rigidly embedded} in $S^{3}$ if the only self homeomorphism of $C$ that extends to a homeomorphism of $S^{3}$ is the identity. This is in marked contrast to the standard embedding of a Cantor set in $S^{3}$ for which every self homeomorphism extends. {See Daverman \cite{Da79} for nonstandard embeddings of Cantor sets with this strong homeomorphism extension property.}

 Let $T$ be a solid torus. Throughout this paper, we
assume that the tori we are working with are \emph{unknotted}
in  $S^{3}$. (Our results and constructions also work in $R^{3}$.) 
A \emph{Bing link} in $T$ is a union of 2
linked solid tori $F_1\cup F_2$ embedded in $T$ as shown in
Figure \ref{BingWhitehead}. A \emph{Whitehead link} in $T$ is a
solid torus $W$ embedded in $T$ as shown in Figure \ref{BingWhitehead}. 
For background details and terminology, see Wright's paper
\cite{Wr89}. 

\begin{figure}[htb]
\begin{center}
\includegraphics[width=.85\textwidth]{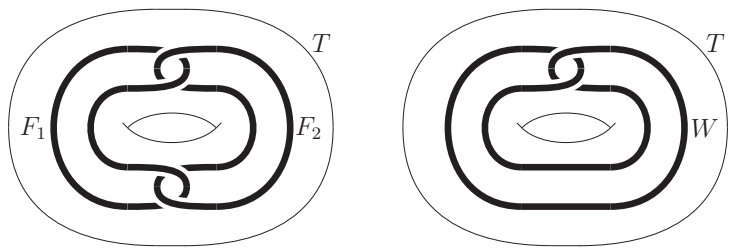}
\end{center}
\caption{Bing and Whitehead links}
\label{BingWhitehead}
\end{figure}

\subsection{Construction of (generalized) Bing-Whitehead compacta}

For completeness and consistency of notation, we outline the steps in the construction of standard Bing-Whitehead  (BW) compacta. 
Let $M_{0}$ be an unknotted torus in $S^{3}$, and $M_{1}$ be obtained from $M_{0}$ by placing a Bing link in $M_{0}$. 
Inductively obtain $M_{k}$ from $M_{k-1}$ \emph{either} by placing a Bing link in each component of $M_{k-1}$ \emph{or} by placing a 
Whitehead link in each component of $M_{k-1}$. Let $m_1$ denote the number of consecutive Whitehead links placed in $M_1$ before the second Bing link occurs, and let $m_k$ denote the number of consecutive Whitehead links that occur between the $k$-th and $(k+1)$st Bing links. 
\begin{defn}
\label{BWcompactum}
The \emph{standard Bing-Whitehead compactum} associated with this construction is defined to be 
$$X=\bigcap_{i=0}^{\infty}M_{i}\text{ \ \   and is denoted }X=BW(m_1,m_2,\ldots).$$
\end{defn}

We also define $M_{i}, i<0$, so that $M_{i}$ is a Whitehead link in $M_{i-1}$ and let $X_{M}^{\infty}$ be $\bigcap_i\left(S^{3}\setminus M_{i}\right)$. 
The set $X_{M}^{\infty}$ is called \emph{the compactum at infinity} associated with $X$ corresponding to the defining sequence $M=(M_{i})$.

This approach leads to the association of a BW compactum with a BW (infinite) labeled binary tree where the vertices correspond to one of the two torus components of a Bing link placed in a torus and the labels on the vertices correspond to the number of Whitehead constructions to perform in the torus associated with that vertex before performing another Bing construction. See Figure \ref{genBW} where $m_{1}=w_{1,1}=w_{1,2}$, $m_{2}=w_{2,1}= \cdots = w_{2,4}$, and so on.

Note that in this standard BW construction we require that all tori in $M_{k}$ are obtained \emph{by the same}
(Bing or Whitehead) \emph{construction} from the respective tori in $M_{k-1}$. We also assume that infinitely many of the $M_{i}, i>0,$ arise from Bing constructions and that infinitely many of them arise from Whitehead constructions.

\begin{figure}[htb]
\begin{center}
\includegraphics[width=.70\textwidth]{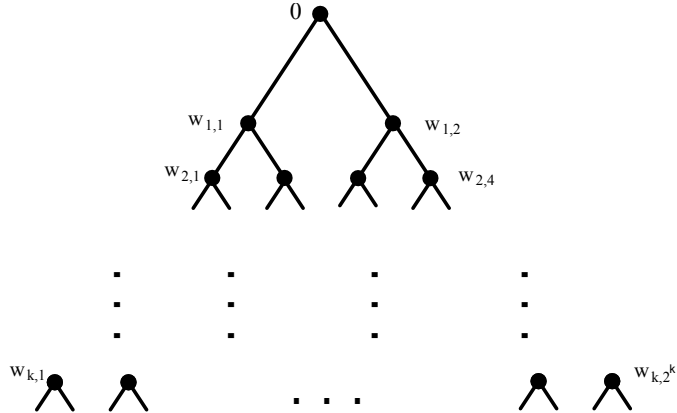}
\end{center}
\caption{  Bing-Whitehead tree}
\label{BWTree2}
\end{figure}

In a generalized BW construction we require that all tori in $M_{k}$ are either obtained by 
Bing or Whitehead construction from the respective tori in $M_{k-1}$, \emph{but not necessarily all of the same kind}.

{The construction can also be associated with a labeled binary tree (see Figure \ref{BWTree2}).
The top node represents an unknotted solid torus in $S^{3}$ denoted by $M_{0}$ or $T_{0,1}$. }Each of the successive nodes at depth $i$ represents an unknotted torus $T_{i,j}, j=1\ldots 2^i$, which is one component of a Bing link in the immediately  preceding stage. 
The numbers $w_{i,j}\in\{0,1,2,\ldots\}$ on nodes denote the number of consecutive Whitehead constructions placed in the torus associated with that node before the next Bing construction occurs.
Define $M_i=\bigcup_{j=1}^{2^i} T_{i,j}$.

\begin{defn}
\label{genBW}The \emph{generalized BW compactum } associated with this construction is defined to be 
$$
X=\bigcap_{i=0}^{\infty}M_{i}\text{\ \  and is denoted }X=BW(z_{1},z_{2},\ldots)
$$ 
where
$z_{i}=[w_{i,1},w_{i,2},\ldots,w_{i,2^{i}}]$.
\end{defn}

As in the standard construction, we also define $M_{i}, i<0$, so that $M_{i}$ is a Whitehead link in $M_{i-1}$,  and let $X^{\infty}$ be $\bigcap_i\left(S^{3}\setminus M_{i}\right)$.

\medskip

Consider now a  component $x$ in a generalized BW compactum $X$. This component corresponds to a unique {ray} starting  at the root vertex in the associated binary tree. The vertex labels along this {ray} form a sequence $s(x)=(0, w_{1,x(1)},w_{2,x(2)}\ldots)$. 

\begin{defn} \label{Wsequence}The sequence $s(x)=(0, w_{1,x(1)},w_{2,x(2)}, \ldots)$ is called the \emph{Whitehead sequence associated with the component} $x$ with respect to the generalized BW compactum  $X = BW(z_1,z_2,\ldots)$.
\end{defn}
This will be of interest in the next few sections, particularly when the components consist of single points so that the compactum $X$ is a Cantor set.

\subsection{Geometric Index}
If $K$ is a link in the interior of a solid torus $T$, then we denote the \emph{geometric index} of $K$ in $T$ by $N(K,T)$.  This is the minimum  of the cardinality $|K \cap D|$ over all meridional disks $D$ of $T$.    If $T$ is a solid torus and $M$ is a finite union of disjoint solid tori so that $M \subset \text{Int}(T_1)$, then the geometric index $N( M,T)$ of $M$ in  $T$ is $N(K,T)$ where $K$ is a core of $M$.  The geometric index of a Bing link in a torus $T$ is 2.  The geometric index of a Whitehead link  in a torus $T$ is also 2. Let $T_0$ and $T_1$ be unknotted solid tori in $S^{3}$ with  $T_0 \subset \text{Int}(T_1)$.  Then $\partial T_0$ and  $\partial T_1$ are parallel if the manifold $T_1 -  \text{Int} (T_0)$ is homeomorphic to $\partial T_0 \times I$ where $I$ is the closed unit interval $[0,1]$.

\begin{remark}\label{geometric index}
See Schubert \cite{Sc53} and \cite{GaReWrZe11} for the following needed results about geometric index.
\begin{itemize}
\item Let $T_0$ and $T_1$ be unknotted solid tori in $S^{3}$ with  $T_0 \subset\text{Int}(T_1)$ and $N( T_0, T_1) = 1$.  Then $\partial T_0$ and  $\partial T_1$ are parallel.
\item Let $T_0$ be a finite union of disjoint tori. Let $T_1$ and $T_2$ be tori so that $T_0 \subset \text{int}T_1$ and $T_1 \subset \text{Int}T_2$.  Then $N(T_0, T_2) =  N(T_0, T_1) \cdot  N(T_1, T_2)$.
\item Let $T$ be a torus in $S^{3}$ and let $T_{1},T_{2}$ be unknotted tori in $T$, each of geometric index $0$ in $T$. Then the geometric index of $\cup_{i=1}^{2}T_{i}$ in $T$ is even.
\end{itemize}
\end{remark}

\section{Results on generalized Bing-Whitehead compacta}
\label{generalizedsection}

\subsection{Constructions yielding Cantor sets}\ \\ 
We are primarily interested in generalized BW compacta that are Cantor sets. At one extreme, if no Whitehead links are used, then the construction can result in a Bing Cantor set \cite{Bi52}. At the other extreme, if no Bing links are used, the resulting compactum has a single component which is the Whitehead continuum and the decomposition of $S^{3}$ that shrinks this to a point does not yield $S^{3}$.

For standard BW compacta, Ancel and Starbird \cite{AnSt89}, and independently {Wright} \cite{Wr89}, gave a complete characterization of when the construction can be done so as to yield a Cantor set in $S^{3}$. What is needed is sufficiently many Bing links, or equivalently sufficiently few Whitehead links in the process. For a generalized BW construction,  
let $w_{i} = \max(z_{i}) = \max [w_{i,1},w_{i,2},\ldots,w_{i,2^{i}}]$.  An examination of Appendix A in \cite{Wr89} shows that the argument there proves the following result:

\begin{thm}\label{Whiteheadlimit}
 {If \ $\sum_{i=1}^{\infty}\dfrac{1}{2^{\sigma_{i}}} =\infty$ where $\sigma_{i}=w_1 + w_2 + \cdots + w_i$, then the decomposition 
  given
  by points and components of the generalized BW compactum $X$ is shrinkable.}
\end{thm}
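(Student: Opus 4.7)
The plan is to verify the Bing shrinkability criterion for the upper semicontinuous decomposition of $S^3$ whose non-degenerate elements are the components of $X$, by adapting the argument of Appendix A of Wright \cite{Wr89} (which handles the standard BW case) to the generalized setting. Given $\epsilon > 0$ and an open cover $\mathcal{U}$ of the non-degenerate elements, I would inductively construct homeomorphisms $h_k \colon S^3 \to S^3$, each supported in $M_k$, such that $h_k$ shrinks each component of $M_{k+1}$ inside its parent torus by a controlled amount and such that every orbit of the partial composition $h_k \circ \cdots \circ h_1$ stays inside some element of $\mathcal{U}$; one then shows the composition converges to a homeomorphism witnessing shrinkability.

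The key geometric input is Wright's shrinking lemma: inside a solid torus $T$ carrying $m$ consecutive Whitehead constructions, one can isotope the innermost sub-torus to arbitrarily small diameter, but doing so consumes shrinking room of order $2^m$ times the target diameter. In contrast, a Bing link splits a torus into two sub-tori that can be separated and each shrunk independently without consuming additional room, since the two components of a Bing link have geometric index $2$ inside their parent but can be slid past each other. Consequently, the room needed to shrink a component at tree-depth $i$ scales like $2^{\sigma_i}$, and the telescoping estimate for the infinite composition converges precisely when the series $\sum_{i=1}^\infty 2^{-\sigma_i}$ diverges. This is the quantitative content of Appendix A of \cite{Wr89}.

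For the generalized construction, the crucial observation is that at tree-depth $i$ different tori $T_{i,j}$ may carry different numbers $w_{i,j}$ of Whitehead constructions, but each is shrunk independently inside its parent, so the worst-case estimate at level $i$ is governed by $w_i = \max_j w_{i,j}$. Thus the uniform condition with $\sigma_i = w_1 + \cdots + w_i$ suffices. Equivalently, one may pad the defining sequence so that every $T_{i,j}$ contains a chain of $w_i$ Whitehead constructions before its next Bing split; the resulting compactum is a standard BW compactum $BW(w_1, w_2, \ldots)$ whose defining sequence of tori contains the original $\{M_i\}$, and shrinkability for the larger standard decomposition passes to the generalized one.

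The main obstacle is not conceptual but the careful bookkeeping in the inductive construction: one must arrange that the stage-$k$ homeomorphism $h_k$ does not undo the shrinking performed at earlier stages, that supports are chosen to respect $\mathcal{U}$, and that the infinite composition converges uniformly to a well-defined homeomorphism of $S^3$. This is handled in Wright's paper by arranging the $h_k$ to be supported in successively deeper stages and using the divergence of $\sum 2^{-\sigma_i}$ to bound the cumulative displacement. The passage to the generalized case requires no new ideas, only the substitution $w_i = \max_j w_{i,j}$ in Wright's quantitative estimates, which is precisely the observation asserted in the statement.
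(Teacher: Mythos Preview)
Your proposal is correct and matches the paper's approach: the paper gives no independent proof but simply asserts that an examination of Appendix~A in \cite{Wr89} shows the argument there already establishes the generalized result, and your write-up makes explicit exactly why---namely that the shrinking estimate at depth~$i$ is governed by the worst torus at that level, so substituting $w_i=\max_j w_{i,j}$ into Wright's bookkeeping suffices. Your padding observation (embedding the generalized sequence in a standard $BW(w_1,w_2,\ldots)$) is a nice alternative phrasing of the same reduction.
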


This result will guide us in the next section in the construction associated with  Theorem 
\ref{main theorem}.

\subsection{Properties of BW Constructions and Compacta}\noindent

Many of the results in \cite{Wr89} and \cite{GaReWrZe11} about standard BW constructions carry over directly with the same proofs to the setting of generalized BW constructions. We list some of the results from the 1989 paper, restated in the general setting, that will be useful for our main results.

\begin{lem}\label{BW properties}
Let $M$ be a Bing or a Whitehead link in  a torus $T$. Let X be a generalized BW compactum with defining sequence $M=(M_{i})$ and $X_{M}^{\infty}$ the associated continuum at infinity.
Then the following holds:
\begin{itemize}
\item \cite[Theorem 4.6 ]{Wr89} No sphere in the complement of $X\cup X^{\infty}$ separates $X\cup X^{\infty}$.
\item
\cite[Theorem 4.3 ]{Wr89} A loop on the boundary of $M_{i}$ 
is essential in the boundary of $M_{i}$
if and only if it is essential in the complement of $X \cup X^{\infty}$.
\item \cite[Theorem 4.4]{Wr89}
If loops $\ell_{1}\text{ and }\ell_{2}$ in $\partial M_{i}$ and 
$\partial M_{j}$ respectively, $i\neq j$, are homotopic in the complement of 
$X\cup X^{\infty}$, then they are inessential in $X\cup X^{\infty}$.
\item \cite{DeOs74}, \cite[Appendix B]{Wr89}{\rm{)}}
The complement of a generalized BW compactum in $S^{3}$ is simply connected.
\end{itemize}
\end{lem}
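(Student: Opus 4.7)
The plan is to verify that the four statements, which are proved in \cite{Wr89} for standard Bing-Whitehead compacta, survive essentially verbatim when we allow the construction type (Bing vs.\ Whitehead) to vary across sibling components at each stage. The guiding principle is that every one of Wright's original arguments is local at each torus component of $M_i$: what happens inside a given component $T_{i,j}$ depends only on whether the link $M_{i+1} \cap T_{i,j}$ is Bing or Whitehead, and is insensitive to the choice made in any other component of $M_i$. With this observation, the proofs transport without essential change; I would only need to relabel the torus components from the standard to the generalized notation $T_{i,j}$ of Definition \ref{genBW} and carry the inductions out over each individual torus rather than uniformly across a level.

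For items (1)--(3), the engine is the essentiality analysis of loops and spheres in the complement of $M_{i+1}$ inside a torus $T_{i,j}$ that has been split by either a Bing link or a Whitehead link. These local ``torus lemmas'' (which in turn rest on the geometric index calculations collected in Remark \ref{geometric index} of our excerpt, and whose multiplicativity is preserved in the generalized setting) give the dichotomy that a loop on $\partial T_{i,j}$ is either null-homotopic in $T_{i,j} \setminus M_{i+1}$ or remains essential there, and similarly that an embedded $2$-sphere cannot separate the pieces of $M_{i+1}$ inside $T_{i,j}$. From these local facts, the proofs of \cite[Theorems 4.3, 4.4, 4.6]{Wr89} proceed by pushing a loop or sphere in $S^3 \setminus (X \cup X^\infty)$ into the complement of some $M_i$ using general position, and then applying the dichotomy at each component of $M_i$ successively to contradict essentiality or separation. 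No step of this induction uses that all components of $M_i$ arise from the same construction.

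For item (4), I would adapt the argument of \cite[Appendix B]{Wr89} (itself building on \cite{DeOs74}): given a loop $\gamma \subset S^3 \setminus X$, put $\gamma$ in general position with respect to all $\partial M_i$ and then perform a finite sequence of disk swaps to eliminate intersections, using in each component of $M_i$ the fact that a Bing or Whitehead link in a torus kills any loop in the torus's boundary that is inessential in the complement of the link. Since the kill-meridian property is again local to each torus component, the argument goes through componentwise.

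The main obstacle I anticipate is verifying carefully that the inductions in \cite{Wr89} never invoke a global parity or uniformity at a given level. Concretely, I would work through Wright's proofs and, wherever he writes ``$M_i$ is obtained from $M_{i-1}$ by Bing (resp.\ Whitehead) doubling,'' replace the statement by ``inside each component $T_{i-1,j}$ of $M_{i-1}$, $M_i$ is obtained by Bing or Whitehead doubling,'' checking that each appeal to \cite[Remark 2.1 type]{Sc53} geometric index calculations, or to the torus lemmas, is still valid because those results are stated for a single torus containing a single link. Once this bookkeeping is in place, all four items follow at once.
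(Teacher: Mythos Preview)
Your proposal is correct and matches the paper's approach exactly: the paper gives no proof for this lemma beyond the prefatory remark that the results of \cite{Wr89} ``carry over directly with the same proofs'' to the generalized setting, and your write-up simply fleshes out why this is so---namely that Wright's arguments are local to each torus component and never use uniformity of the Bing/Whitehead choice across a level. There is nothing to add.
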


\subsection{Matching up Stages} The key to proving rigidity for certain generalized BW Cantor sets is to show that if such a Cantor set $C$ has two defining sequences $(M_{i})$ and $(N_{i})$ then the components can be made to match up. For completeness, we provide  outlines of the proofs of key results: Lemma \ref{remove curves}, Lemma \ref{match stage}, Corollary \ref{matchWsequences} and Theorem \ref{sametails}. For more complete details, see  \cite{GaReWrZe11}.

\begin{lem}\label{remove curves} Let $X$ be a generalized BW compactum  
with two defining BW  sequences $(M_{i})$  and  $(N_{j} )$. Let $T$ be a component of
some $M_{i}$ and let $T^{\prime}$ be the component of $M_{i-1}$ containing $T$.
If $T$ lies in the interior of a component $S^{\prime}$ of some $N_{j}$, and $S^{\prime}\subset M_{0}$, then there is a homeomorphism $h$ of $S^{3}$, fixed on $X\cup (S^{3}-(T^{\prime}\cup S^{\prime}))$, so that $h( \partial (T))\cap \partial (N_{j+1}\cap S^{\prime})=\emptyset$.
\end{lem}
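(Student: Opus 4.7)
The plan is to put $\partial T$ into general position with $\partial Q$, where $Q := N_{j+1}\cap S'$ is one (Whitehead case) or two (Bing case) disjoint solid tori inside $S'$, and then to eliminate the intersection curves one at a time by ambient isotopies of $S^3$ supported in $T'\cup S'$ and fixing $X$ pointwise. The composition of these isotopies will be the required homeomorphism $h$. Since $T\subset \mathrm{int}\,S'$ by hypothesis, $\partial T$ is already disjoint from $\partial S'$, so the entire analysis takes place in the compact region $T'\cup S'\subset M_0$ and $\partial T\cap \partial Q$ is a finite disjoint union of simple closed curves on each of the two tori.

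The routine part of the reduction is innermost disk surgery. If a curve $c$ of $\partial T\cap\partial Q$ is inessential on $\partial T$, let $D\subset\partial T$ be an innermost disk bounded by $c$. Its interior is disjoint from $\partial Q$, and because $X\subset \mathrm{int}\,M_i$ the disk $D\subset\partial T$ avoids $X$ entirely. A push of a collar neighborhood of $D$ in $\partial T$ across $\partial Q$, supported in a product neighborhood of $D$ inside $T'$, strictly decreases the total number of intersection curves while touching neither $X$ nor the exterior of $T'\cup S'$. The symmetric move, using an innermost disk on $\partial Q$, handles curves inessential on $\partial Q$. After finitely many such surgeries we may assume every remaining intersection curve is essential on both $\partial T$ and $\partial Q$.

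The main obstacle is to rule out this remaining configuration. For any such curve $c$, the second bullet of Lemma \ref{BW properties} applied to the sequence $(M_i)$ shows that $c$ is essential in the complement of $X\cup X_M^{\infty}$, and applied to $(N_j)$ that it is essential in the complement of $X\cup X_N^{\infty}$. On the other hand, by the fourth bullet $S^3\setminus X$ is simply connected, so $c$ bounds a singular disk in $S^3\setminus X$; Dehn's lemma upgrades this to an embedded disk $\Delta\subset S^3\setminus X$ with $\partial\Delta=c$. Cutting $\Delta$ along its intersection with $\partial T$ and $\partial Q$ and taking an innermost piece yields a compressing disk for one of the two tori, and combining it with the meridian disk of the component $Q_0$ of $Q$ containing $X\cap T$ produces either a $2$-sphere in the complement of $X\cup X_M^{\infty}$, which by the first bullet of Lemma \ref{BW properties} cannot separate $X\cup X_M^{\infty}$, or (via Remark \ref{geometric index}) a product expansion forcing $N(T,T')$ or $N(Q_0,S')$ to be odd — both incompatible with the fact that every Bing and Whitehead stage contributes geometric index exactly $2$. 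This contradicts the essentiality of $c$, so the reduction terminates with $\partial T\cap\partial Q=\emptyset$ and $h$ is obtained as the composition of the intermediate isotopies.
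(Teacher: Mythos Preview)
Your proposal is correct through the innermost-disk removal of inessential intersection curves, and that part matches the paper's argument almost verbatim.  The genuine gap is in the final paragraph: you assert that once only essential curves remain on $\partial T\cap\partial Q$ one obtains a contradiction, but this is not so.  Essential intersection curves \emph{can} occur.  For a concrete instance take $T'=S'$ and let $T$ and $Q$ both be Whitehead links in this torus, with $Q$ a small perturbation of $T$; then $\partial T$ and $\partial Q$ meet in parallel essential curves and no amount of innermost-disk surgery removes them.  This is precisely why the paper devotes the bulk of its proof to Cases I--III, where such essential $(p,q)$ curves are actively eliminated by locating an annulus $A\subset\partial S$ between two adjacent intersection curves, analysing $A\cap D$ for a meridional disk $D$ of $T$, building an auxiliary solid torus $T_1\subset T$, and pushing $A$ across $T_1$ and out of $T$.

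Your contradiction argument does not close.  The disk $\Delta\subset S^3\setminus X$ produced by Dehn's lemma certainly exists, but it is under no obligation to stay inside $T'\cup S'$ or to avoid $X_M^{\infty}$; indeed, since $c$ is essential in $S^3\setminus(X\cup X_M^{\infty})$ by Lemma~\ref{BW properties}, the disk $\Delta$ \emph{must} meet $X_M^{\infty}$.  The innermost subdisk you extract is then just some compressing disk for $\partial T$ in $S^3\setminus X$, and such disks always exist (a generic meridional disk of $T$ misses the Cantor set $X$), so no contradiction arises.  The subsequent sentence about ``the component $Q_0$ of $Q$ containing $X\cap T$'' presupposes a containment that does not hold when $\partial T$ and $\partial Q$ intersect, and the sphere / geometric-index dichotomy is not justified: nothing forces the sphere you build to have points of $X\cup X^\infty$ on both sides, and nothing forces $N(T,T')$ to become odd.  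In short, the essential-curve configuration has to be dismantled geometrically, not ruled out; the annulus-pushing machinery of the paper (or something equivalent) is the missing ingredient.
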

\begin{proof} After a general position adjustment, we may assume that $ \partial (T)\cap \partial (N_{j+1}\cap S^{\prime})$ consists of a finite collection of simple closed curves. Since 
a curve on the boundary of $T$ is essential if and only if it is essential in the complement of $X \cup X_{M}^{\infty}$ and a curve in the boundary of $N_{j+1}$ is essential if and only if it is essential in the complement of $X\cup X_{N}^{\infty}$, we see that a curve on $\partial T$ is trivial on $ \partial (T)\cap N_{j+1}$ is trivial on $\partial T$ if and only if it is trivial on $\partial N_{j+1}$.

\textbf{Removing Trivial Curves of Intersection:} If there are any trivial curves, choose a component $S$ of $S^{\prime}\cap N_{j+1}$  that contains such a curve in $\partial S$.  Choose an innermost trivial simple closed curve $\alpha$ on $\partial S$.  Since $\alpha$ is innermost, it bounds a disk $D^{\prime}$ with interior missing  $\partial T$.  The curve $\alpha$ also bounds a disk $D$ in $\partial T$.

The $2$-sphere $D\cup D^{\prime}$ bounds a three-cell  in $S^{\prime}\cap T^{\prime}$
that  contains no points of $X$. Use this three-cell to push $D$ onto $D^{\prime}$ and then a little past $D^{\prime}$ into a collar on the cell by a homeomorphism $h$ of $S^{3}$. This homeomorphism can be chosen to fix $X$, $S^{3}- S^{\prime}$, and $S^{3}-T^{\prime}$. 
This has the result that  $h(\partial T)\cap \partial (S^{\prime}\cap N_{j+1})$ has fewer curves of intersection than $\partial T \cap \partial (S^{\prime}\cap N_{j+1})$, and so that no new curves of intersection with $\partial (N_{j+1})$ are introduced. Continuing this process eventually removes all trivial curves of intersection of $\partial T \cap \partial (S^{\prime}\cap N_{j+1})$.

\textbf{Removing Nontrivial Curves of Intersection:} At this point,  there is at most one component $S$ of $N_{j+1}\cap S^{\prime}$ for which  $\partial T \cap \partial S\neq\emptyset$.  These remaining curves of intersection on $\partial T$ must be parallel $(p,q)$ torus curves and the corresponding curves on $\partial S$ must be parallel $(s,t)$ curves.
If both $p$ and $q$ are greater than 1, so that the torus curve is a nontrivial knot, then $(s,t)=(p,q)$ or $(s,t)=(q,p)$.

To simplify notation in what remains, we will refer to $h(T)$ as (the new) $T$.  
We  work towards removing these remaining curves of intersection of the boundaries, so that  $T$ is either contained in a component $N_{j+1}\cap S$  or contains the components of $(N_{j+1}\cap S)$.  Consider an annulus $A$ on the boundary of $S$ bounded by two adjacent curves of the intersection of $\partial S$ and $\partial T$. Choose this annulus so that its interior lies in the interior of $T$. We consider separately all possibilities for how the boundary curves of $A$ lie on $S$. 

{\setlength{\parindent}{0pt}\emph{I. Curves of intersection on $T$ that are $(p,q)$ curves for $p\geq 2$.}}

Consider a meridional disk $D$ for $T$ in general position with respect to $A$ so that $D\cap A$ consists of $p$ arcs intersecting the boundary of $D$ in endpoints and of simple closed curves. Figure \ref{meridian remove} illustrates a possible situation when $p=5 \text{ and } q=3$ and trivial curves of intersection have been removed. The solid unlabeled disks indicate the intersection of the next stage  $M_{i+1}$ with $D$.

The intersection of $A$ with $D$ can be adjusted using cut and paste techniques similar to that used in the trivial curve case so that the end result is intersections as in one of the two cases in  Figure  \ref{meridian remove}.

\begin{figure}[htb]
\begin{center}
\includegraphics[width=.4\textwidth]{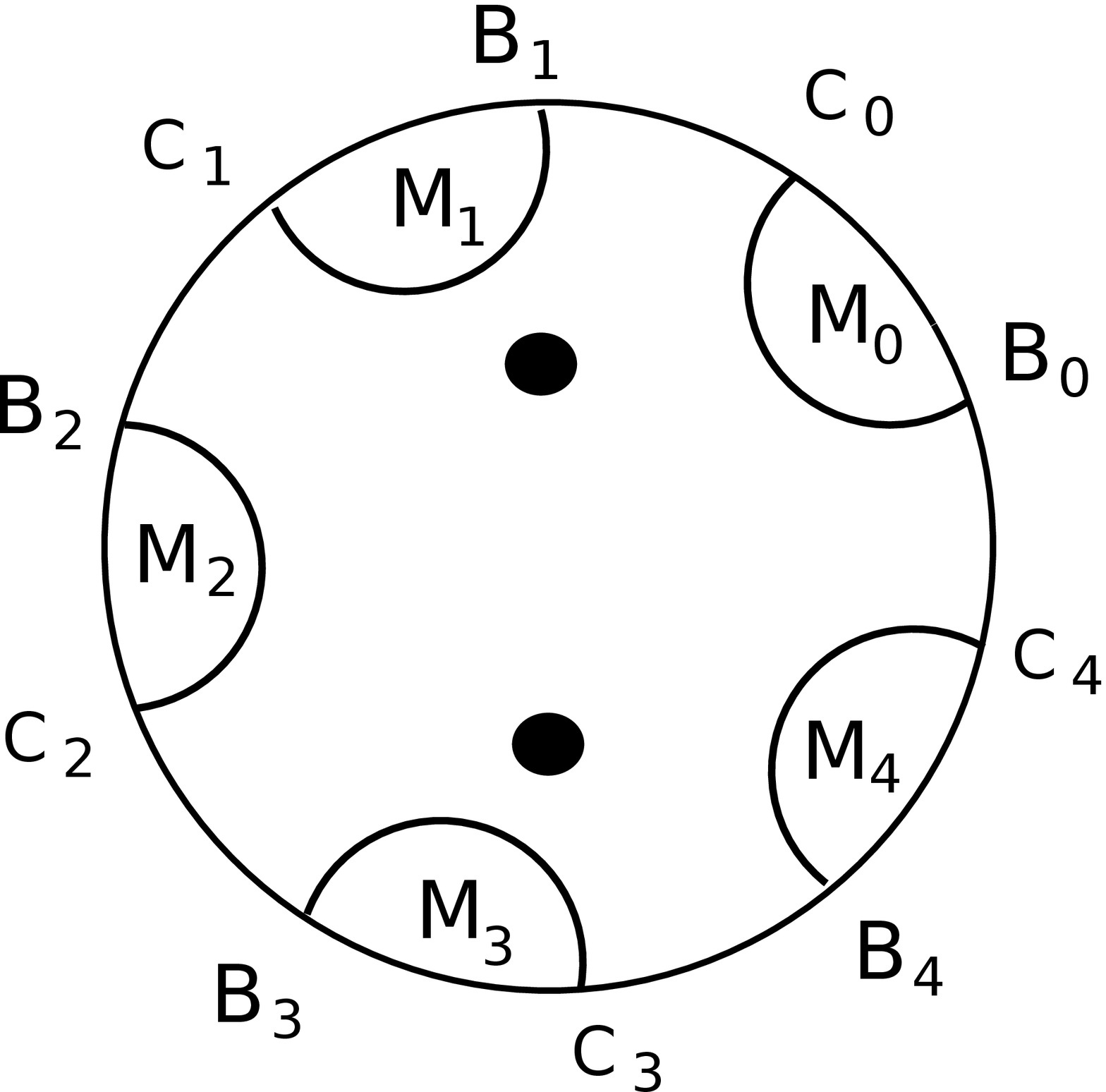}
\ \ \includegraphics[width=.4\textwidth]{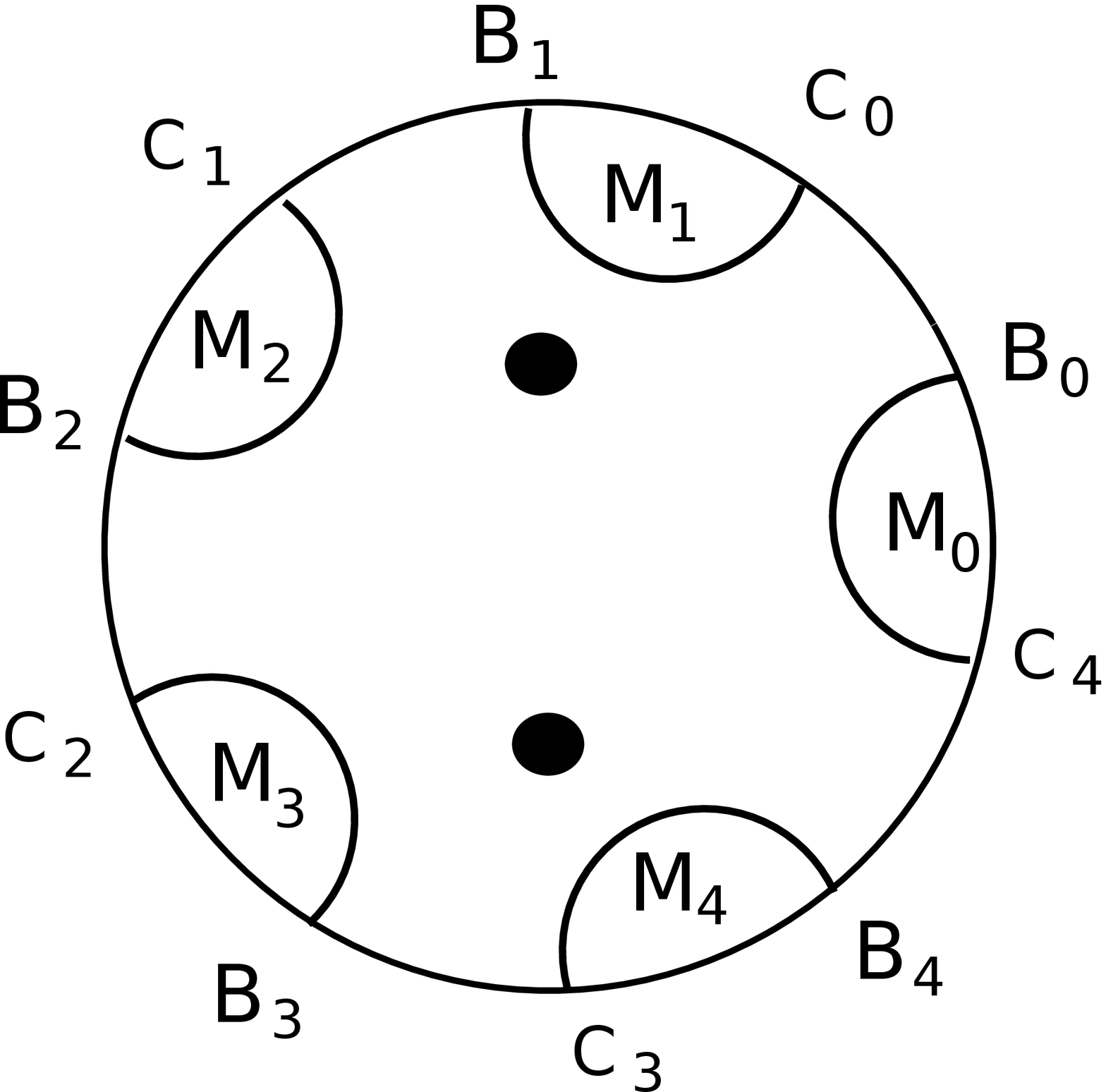}
\end{center}
\caption{Meridional Disk $D$ of $T$ after Adjustment}
\label{meridian remove}
\end{figure} 

Each of the regions labeled $M_{i}$ can be shown to be a meridional disk of a torus $T_{1}$ that is contained in $T$. This torus will then be used to push across and remove the intersections of $A$ with $T$. 
The intersection of $T$ with $S$ corresponding to $A$ can now be removed by a homeomorphism of $S^{3}$ fixed on $X$ and on the complement of a small neighborhood of $T$ that takes $A$ through $T_{1}$ to an annulus parallel to $A_{1}$ and just outside of $T$. Inductively, all curves of intersection of $T$ with $S$ can be removed by a homeomorphism of $S^{3}$ fixed on $X$ and the complement of a small neighborhood of $T$.

{\setlength{\parindent}{0pt}\emph{II. Curves of intersection on $S^{\prime}$ that are $(p,q)$ curves for 
$p=1$.}}

An argument similar to that above can be used. 

{\setlength{\parindent}{0pt}\emph{III. Curves of intersection on $S^{\prime}$ that are $(p,q)$ curves for 
$p=0$.}}

In this case the curve is a $(0,q)$ curve for the torus $T$, but it is a $(q,0)$ curve for the complementary torus with $q \ne 0$.  In this case there is an annulus $A$ on the boundary of $T$ that has its interior in the exterior of $T$, so that the intersection of $A$ with the boundary of  $T$ consists of curves in the intersection of the boundaries of $T$ and  $S^{\prime}$.  This essentially turns the problem inside out, and we can use the previous methods to push $A$ to the interior of  $T$ fixed on a slightly shrunken  $T$, all the other components of $M_i$, and the complement of $M_{i-1}$.

\end{proof}

\begin{lem} \label{match stage}Assume that $X$ is a generalized BW compactum  
with two defining BW  sequences $(M_{i})$  and  $(N_{j} )$. Choose $N$ so that
the components of $M_{N}$ are in the interior of $N_{0}$. Let $T$ be a component of
$M_{N}$. Then there is an integer $K$ and a homeomorphism $h$ of $S^{3}$, fixed on $X \cup (S^{3}-(N_{0}\cup M_{N-1}))$ so that $h(T)$ is interior to components of $N_{i}, i < K$ and so that $h(T)$ is a component of $N_{K}$.
\end{lem}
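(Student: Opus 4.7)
The plan is to apply Lemma \ref{remove curves} iteratively to disjointify $\partial T$ from successive boundaries $\partial N_j$, then invoke the geometric-index machinery of Remark \ref{geometric index}---specifically that geometric index one implies parallel boundaries---to promote the resulting configuration into an exact match via a collar isotopy.

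First I apply Lemma \ref{remove curves} with $i = N$, $j = 0$, $S^\prime = N_0$: this produces a homeomorphism $h_1$, fixed on $X \cup (S^3 \setminus N_0)$ (using that $T^\prime \subset M_{N-1} \subset N_0$), with $h_1(\partial T) \cap \partial N_1 = \emptyset$. Replace $T$ by $h_1(T)$. Since $\partial T$ and $\partial N_1$ are disjoint and $T$ meets $X \subset N_1$ nontrivially, either (a) $T$ lies in the interior of a single component $S_1$ of $N_1$, or (b) $T$ contains one or more components of $N_1$ in its interior. In case (a) I iterate, applying Lemma \ref{remove curves} with $S^\prime = S_1$ and $j = 1$ to produce $h_2$ supported in $S_1 \cup T^\prime$ with $h_2(\partial T) \cap \partial N_2 = \emptyset$, and continuing at deeper stages.

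In case (b), let $C$ denote the union of the components of $N_1$ contained in $T$. The multiplicativity bullet of Remark \ref{geometric index} gives $N(C, N_0) = N(C, T) \cdot N(T, N_0)$, and since $C \subset N_1$ we have $N(C, N_0) \leq N(N_1, N_0) = 2$. The even-index bullet handles the subcase in which $C$ has two components (Bing link), forcing $N(C, T) \geq 2$ and hence $N(T, N_0) = 1$; since in that subcase $X \subset C \subset T$, a collar isotopy in the $X$-free region $N_0 \setminus T$ matches $T$ to $N_0$, giving $K = 0$. If $C$ consists of a single component $S'$, then $X \cap T \subset T \cap N_1 = S'$, and from $N(S', T) \cdot N(T, N_0) \leq 2$ at least one factor is $1$; the first bullet of Remark \ref{geometric index} then gives parallel boundaries, and a collar isotopy inside the $X$-free collar $N_0 \setminus T$ or $T \setminus S'$ matches $T$ with $N_0$ or with $S'$, giving $K \in \{0, 1\}$.

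For termination, observe that case (a) cannot continue forever: each $h_j$ has support inside the component $S_{j-1}$ of $N_{j-1}$ containing $T$, and the diameters of the components of $N_j$ shrink to $0$ since $X$ is a Cantor set. The infinite composition $h_\infty = \cdots \circ h_2 \circ h_1$ is therefore well-defined, and persistent case (a) would force $h_\infty(T) \subset \bigcap_j S_j$, a single point of $X$---impossible for a solid torus. So case (b) must arise at some finite $K$. Composing the finitely many $h_j$'s with the final collar isotopy yields the desired $h$; it fixes $X$ (each constituent does) and has support in $N_0 \cup M_{N-1}$. The main obstacle will be the case (b) analysis: isolating the matching component $S'$ so that the relevant collar is genuinely $X$-free, and carefully leveraging all three bullets of Remark \ref{geometric index} in tandem to rule out the ``bad'' index configurations so that the parallel-boundary matching extends to a global self-homeomorphism with the required support.
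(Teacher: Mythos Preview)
Your overall architecture matches the paper's: iterate Lemma \ref{remove curves} to push $\partial T$ off successive $\partial N_j$'s until some component of $N_K$ first falls inside $T$, then use geometric index to force a parallel-boundary match. But there is a genuine gap in your case (b) analysis, and a secondary issue in your termination argument.

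\medskip
\textbf{The gap: one Bing component inside $T$.} In your single-component subcase you write ``from $N(S',T)\cdot N(T,N_0)\le 2$ at least one factor is $1$''. This is fine when $N_1$ is a Whitehead link in $N_0$, since then $N(S',N_0)=2$. But your case split does not exclude the possibility that $N_1$ is a \emph{Bing} link $S\cup R$ in $N_0$ and $T$ contains $S$ but not $R$. In that situation $N(S,N_0)=0$, so multiplicativity gives $N(S,T)\cdot N(T,N_0)=0$; one factor is $0$, not $1$, and the parallel-boundary bullet of Remark \ref{geometric index} says nothing. No amount of index bookkeeping from Remark \ref{geometric index} alone resolves this configuration. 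The paper handles it by a separate topological fact: the three solid tori $S$, $R$, and $S^3\setminus\text{Int}(S')$ (with $S'$ the outer torus) form Borromean rings, so there is a self-homeomorphism of $S^3$ that preserves $S'\setminus\text{Int}(S\cup R)$ and carries $\partial S$ to $\partial S'$. This symmetry converts the awkward ``$T$ contains exactly one Bing component'' case into one already covered. You need either this Borromean-rings step or some equivalent argument; as written, your proof does not close this case.

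\medskip
\textbf{Termination.} Your argument that case (a) cannot persist forever invokes ``diameters of the components of $N_j$ shrink to $0$ since $X$ is a Cantor set'', but the lemma is stated for a generalized BW \emph{compactum}, where component diameters need not shrink. The paper's termination is both simpler and correct in this generality: since $\bigcap_j N_j = X \subset \text{Int}(M_N)$, compactness gives an $L$ with $N_L\subset\text{Int}(M_N)$, so case (b) is forced by stage $L$ at the latest. No infinite composition $h_\infty$ is needed.
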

\begin{proof} Choose an integer $L$ so that the components of $N_{L}$ are interior to $M_{N}$.
Inductively apply Lemma \ref{remove curves} producing successive homeomorphisms so that the image of $\partial(T)$ is disjoint from $\partial(N_{i})$ until a first stage $K\leq L$ is reached where a component of  $N_{K}$ is interior to the image of $T$.

At this point, there is a homeomorphism $h$ of $S^{3}$, fixed on $X\cup (S^{3}-N_{0})\cup (S^{3}-M_{N-1})$ and a component $S^{\prime}$ of $N_{K-1}$ so that $h(T)\subset \text{Int}(S^{\prime})$
and so that a component $S$ of $N_{K}\cap S^{\prime}$ is interior to $h(T)$.

\textbf{Case 1. }\emph{S is a Whitehead link in $S^{\prime}$:}

The geometric index of $S$ in $S^{\prime}$ is two. Thus the geometric index of $h(T)$ in $S^{\prime}$ is non-zero and the geometric index of $S$ in $h(T)$ is non-zero. So the geometric index of $h(T)$ in $S^{\prime}$ must be one or two. If it is one, then there is a further homeomorphism of $h^{\prime}$ of $S^{3}$ that is fixed on $X$ and the complement of a small neighborhood of $S^{\prime}$ that takes $h(T)$ to $S^{\prime}$. If the geometric index of $h(T)$ in $S^{\prime}$ is two, then the geometric index of $S$ in $h(T)$ is one and there is a further homeomorphism of $h^{\prime}$ of $S^{3}$ that is fixed on $X$ and the complement of $S^{\prime}$ that takes $h(T)$ to $S$.

\textbf{Case 2. }\emph{S is one component of a Bing link in $S^{\prime}$}:

Let $R$ be the other component of the Bing link. If $h(T)$ contains $S\cup R$, the geometric index of $S\cup R$ in $S^{\prime}$ is two, and so the geometric index of $S\cup R$ in $T$ and the geometric index of $h(T)$ in $S^{\prime}$ must be nonzero and $\leq 2$. The geometric index of each of $R$ and $S$ in $h(T)$  must then be zero since the geometric index of $R$ and $S$ in $S^{\prime}$ is zero. So the geometric index of $S\cup R$ in $h(T)$ is even and must be two, and the geometric index of $h(T)$ in $S^{\prime}$ must be one. Then there is a further homeomorphism of $h^{\prime}$ of $S^{3}$ that is fixed on $X$ and the complement of a small neighborhood of $S^{\prime}$ that takes $h(T)$ to $S^{\prime}$.

The other case where $h(T)$ contains only $S$ but not $R$ follows from the fact that there is a homeomorphism of $S^{3}$ to itself that takes $T-\text{Int}(S\cup R)$ to itself and takes 
$\partial S $ to $\partial T$. This homeomorphism follows from the  fact that 
$S\cup R\cup (S^{3}-\text{Int}(T))$ are Borromean Rings.

\end{proof}

The next result shows that once stages of defining sequences match up, all further stages can be made to match up.

\begin{cor}\label{matchWsequences}If $x$ is a component of the generalized BW compactum with 
two defining sequences $(M_{i})$  and  $(N_{j} )$ and with $M_0$ = $N_0$, then the Whitehead sequence of $x$ with respect to $(M_{i})$ is the same as the Whitehead sequence of $x$ with respect to $(N_{j})$.
\end{cor}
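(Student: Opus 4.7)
My plan is a proof by strong induction on $k \geq 0$, with the inductive hypothesis at level $k$ that there exists a homeomorphism $h$ of $S^3$ fixing $X$ pointwise such that $h(T_{k,x(k)}^M) = T_{k,x(k)}^N$, and the first $k+1$ entries of the two Whitehead sequences agree. The base case $k=0$ is immediate since $M_0 = N_0$ and the initial entry of each sequence is $0$ by convention.

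For the inductive step, write $T := T_{k,x(k)}^M = T_{k,x(k)}^N$ (identified via the previous $h$). Inside $T$ the $M$-sequence places $w^M := w_{k+1,x(k+1)}^M$ nested Whitehead constructions followed by a Bing link, and similarly the $N$-sequence places $w^N$ Whiteheads followed by a Bing link. Let $R_M$ denote the last Whitehead-stage torus of $M$ inside $T$, a component of some $M_n$ that contains the entire $M$-Bing link. Apply Lemma \ref{match stage} (localized to $T$) to $R_M$, yielding a homeomorphism $h'$ fixing $X$ and the complement of $T$, so that $h'(R_M)$ is a component of some $N_K$ and is interior to components of $N_i$ for $i < K$.

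The crucial observation is $h'(R_M) \supseteq X \cap T$, since $R_M \supseteq X \cap T$ and $h'$ fixes $X$. Any component of $N_i$ strictly past the $(k+1)$-th Bing stage in $N$ is nested inside a single $N$-Bing half and therefore meets only a proper nonempty subset of $X \cap T$; hence $K$ must correspond to a Whitehead stage of the $N$-sub-construction in $T$, say the $j$-th, with $j \leq w^N$. Multiplicativity of the geometric index (Remark \ref{geometric index}) then yields $N(R_M, T) = 2^{w^M}$ and $N(h'(R_M), T) = 2^j$, so $w^M = j \leq w^N$; symmetrically $w^N \leq w^M$, hence equality.

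To complete the inductive step we must also identify the Bing halves containing $x$. Set $S := h'(R_M)$, which now contains both the $M$-Bing link (pushed forward by $h'$) and the $N$-Bing link. Applying Lemma \ref{match stage} once more to $T_{k+1,x(k+1)}^M \subset S$ produces a component of some $N_{K'}$ containing $X \cap T_{k+1,x(k+1)}^M$; a symmetric application in the reverse direction forces the $X$-intersections of the two Bing halves containing $x$ to coincide, which pins down $K'$ to be the $(k+1)$-th Bing stage in $N$ and matches the two Bing halves. I expect this final Bing-matching step to be the main technical obstacle: even after the Whitehead counts agree, the two Bing links inside $S$ are a priori distinct embeddings, and coordinating Lemma \ref{match stage} with the constraint that the homeomorphism fix $X$ pointwise is what forces the Bing halves containing $x$ to coincide.
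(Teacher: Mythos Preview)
Your approach is more elaborate than necessary, and the gap you flag at the end is real. The paper avoids it entirely by inducting on the \emph{full} stage index~$n$ (each step being a single Bing or Whitehead construction), not on the Bing-tree depth~$k$. Given a homeomorphism $h_n$ fixing $X$ and $\partial M_0$ with $h_n(M_i)=N_i$ for $i\le n$, take any component $T$ of $N_n$, set $M=h_n(M_{n+1})\cap T$ and $N=N_{n+1}\cap T$, and use Lemma~\ref{remove curves} to make $\partial M\cap\partial N=\emptyset$. A direct geometric-index argument (both $M$ and $N$ have index~$2$ in $T$; each Bing component has index~$0$) forces $M$ and $N$ to have the same number of components, so both are Whitehead or both are Bing in $T$; in either case the boundaries are parallel and one pushes $M$ to $N$ rel $X\cup\partial T$. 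Doing this in every component of $N_n$ gives $h_{n+1}$.

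By contrast, you jump from one Bing level to the next and invoke the heavier Lemma~\ref{match stage}. Your Whitehead-count argument ($w^M=w^N$) is essentially sound (modulo the degenerate case $w^M=0$, where there is no ``last Whitehead-stage torus'' $R_M$), but your Bing-half matching is only sketched. The difficulty you anticipate is genuine: once you apply Lemma~\ref{match stage} to the Bing half $T^M_{k+1,x(k+1)}$, the resulting component of $N_{K'}$ need not lie at the $(k+1)$st Bing level of $N$; it could land strictly deeper, and the ``symmetric application'' you invoke does not by itself force the $X$-intersections of the two Bing halves to coincide without further nesting and index analysis. The paper's stage-by-stage argument sidesteps all of this: by never skipping the intermediate Whitehead stages, it only ever has to compare a single Bing-or-Whitehead link in $T$ against another, and parallelism is immediate from Remark~\ref{geometric index}.
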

\begin{proof}
It suffices to show that there is a self homeomorphism of $M_0$ = $N_0$ that is fixed on  $\partial M_0$ = $ \partial N_0$ and on $X$ that takes $M_i$ onto $N_i$ for any specified finite number of stages. 

Suppose that such a homeomorphism $h_n$ exists that matches the components up through $n$ stages.  Let $T$ be a component of $N_n$.  Let $M$ equal $h_n(M_{n+1}) \cap T$ and $N$ equal $N_{n+1} \cap T$.  By Lemma \ref{remove curves} we may assume that the boundaries of $M$ and $N$ are disjoint.  

A geometric index argument shows that $M$ and $N$ both have the same number of components. 
If $M$ and $N$ both have one component, a geometric index argument similar to Case 1
of Lemma \ref{match stage} shows $\partial N$ is parallel to $\partial T$ or $\partial M$.  But the geometric index of $N$ in $T$ is 2 so $\partial M$ and $\partial N$ are parallel and the boundaries can be matched up with a homeomorphism of $T$ taking  $\partial M$ to $\partial N$ fixed on $X$ and $\partial T$.  The same argument works if  $N$ lies in $M$.  

Suppose now that $M$ and $N$ both have two components.  Then one component of $M$ contains or is contained in one component of $N$ and the other component of $M$ contains or is contained in the other component of $N$.  A geometric index argument similar to Case 2 of Lemma \ref{match stage}  can be used to show that $\partial M$ and $\partial N$ are parallel and as before we can get a homeomorphism fixed on $X$ and $\partial T$ taking $M$ to $N$.

Repeating this argument in each component of $N_n$ gives the homeomorphism $h_{n+1}$.
\end{proof}

 The previous  lemmas and corollary were used in \cite{GaReWrZe11} to show that if a standard BW Cantor set $X$ in $S^{3}$ has two defining sequences $(M_{i})$ and $(N_{j})$, then past some finite stage $N$, the BW pattern for $X$ from  $(M_{i})$ must be identical to the BW pattern for $X$ from $(N_{j})$. That is, the BW patterns for the two defining sequences can differ only in a finite number of Whitehead constructions. This is a consequence of the fact that the lemmas above, together with the characterization of when a Cantor set is obtained, show that if the BW patterns do not match up at some same finite stage, then the BW patterns are repeating which contradicts the fact that a Cantor set is obtained.
 
 The situation is not quite so simple for generalized BW constructions. The pattern illustrated in Figure \ref{mixing} can occur. If $A$, $B$, $C$, and $D$ represent four generalized BW compacta, they can be assembled into a single BW compactum in either of the two ways (and in other ways) indicated in this Figure. On the left, the constructions for $A$, $B$, $C$, and $D$ are performed after doing two initial Bing constructions. On the right, the construction for $A$ is done in one of the two tori of an initial Bing construction. The construction for $B$ is done in one of the two tori after another Bing construction in the component not corresponding to $A$. The constructions for $C$ and $D$ are done at the next stage.
 
\begin{figure}[htb]
\begin{minipage}[b]{0.45\linewidth}
\begin{center}
\includegraphics[width=.85\textwidth]{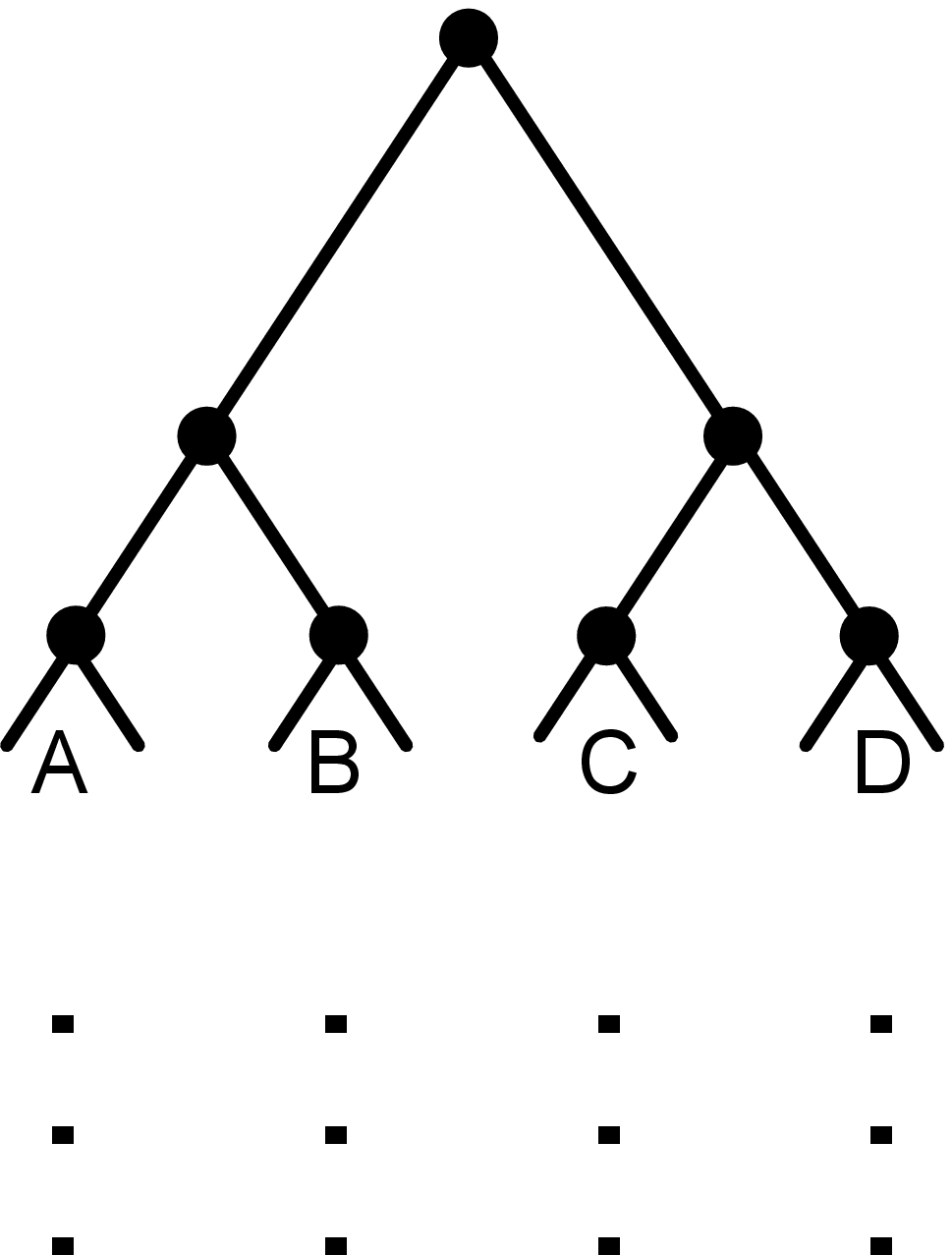}
\end{center}
\end{minipage}
\hfill
\begin{minipage}[b]{0.45\linewidth}
\begin{center}
\includegraphics[width=.75\textwidth]{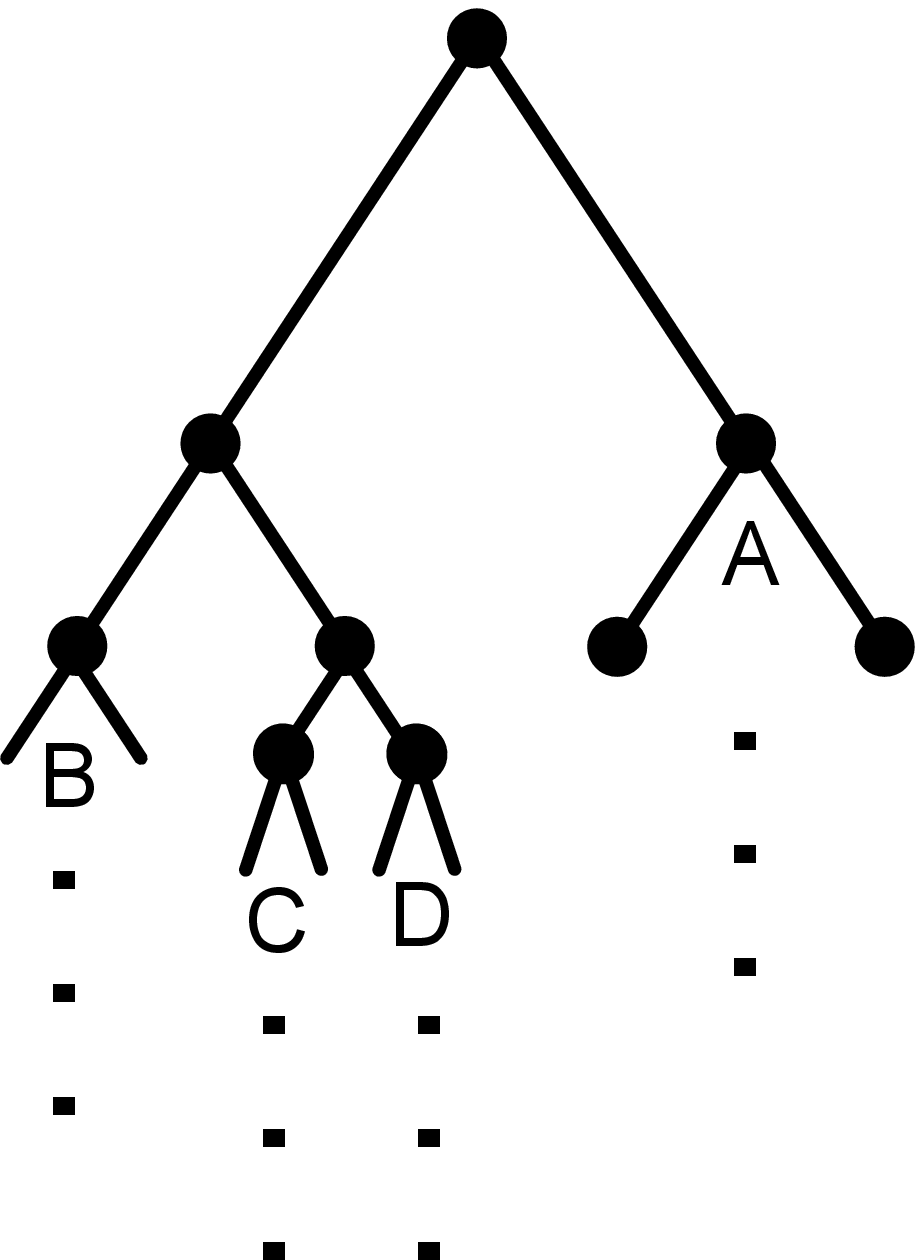}
\end{center}
\end{minipage}
\caption{Equivalent Generalized Constructions}
\label{mixing}
\end{figure}

We can however analyze the following situation that is needed in proving rigidity (the case $X=Y$ of the following).
\begin{thm}\label{sametails}
Assume that $X$ and $\,Y$ are generalized BW {compacta}  
with  defining  sequences $(M_{i})$  and  $(N_{j})$. Let $c$ be a component of $X$. Assume there is a self homeomorphism $h$ of $S^3$ that takes $X$ to $Y$ and takes $c$ to a component $d$ of $Y$. Let $s(c)$ be the Whitehead sequence associated with
$c$ from $(M_{i})$ and let $s(d)=s(h(c))$  be the Whitehead sequence associated with $d$ 
from $(N_{j})$ as in  Definition \ref{Wsequence}. Then these two sequences have the same tails. That is, there is a stage $n$ in the first sequence and a stage $m$ in the second sequences such that for all $k$, $s(c)_{n+k}=s(d)_{m+k}$.
\end{thm}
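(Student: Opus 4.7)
The plan is to reduce to the case of a single compactum with two defining sequences, then use Lemma \ref{match stage} to match a single stage across the two sequences, and finally invoke Corollary \ref{matchWsequences} to force the Whitehead sequences to agree from that point on.

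First I would normalize by transporting one defining sequence through $h$. Since $h$ is a self-homeomorphism of $S^3$ with $h(X)=Y$, the family $(h^{-1}(N_j))$ is a defining sequence for $X$, the component $c=h^{-1}(d)$, and the Whitehead sequence of $c$ with respect to $(h^{-1}(N_j))$ coincides entry-for-entry with $s(d)$ with respect to $(N_j)$. Thus we may assume $X=Y$, $c=d$, and that $X$ has two defining sequences $(M_i)$ and $(N_j)$. The goal becomes: the Whitehead sequences of $c$ from these two defining sequences have matching tails.

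Next, pick $N$ large enough that every component of $M_N$ lies in the interior of $N_0$, and let $T$ be the component of $M_N$ containing $c$. Lemma \ref{match stage} produces an integer $K$ and a self-homeomorphism $\phi$ of $S^3$, fixed on $X\cup(S^3-(N_0\cup M_{N-1}))$, such that $T_{*}:=\phi(T)$ is a component of $N_K$ (and by construction still contains $c$). Because $\phi$ fixes $X$ pointwise, $(\phi(M_i))$ is another defining sequence for $X$ and the Whitehead sequence of $c$ from $(\phi(M_i))$ is identical to the Whitehead sequence of $c$ from $(M_i)$. We have therefore arranged a single torus $T_{*}$ which is simultaneously a component of $\phi(M_N)$ and of $N_K$, and whose interior contains $c$.

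Now consider the sub-Cantor set $X\cap T_{*}$, with the two restricted defining sequences $(\phi(M_i)\cap T_{*})_{i\geq N}$ and $(N_j\cap T_{*})_{j\geq K}$. Both have $T_{*}$ as their ``zeroth stage,'' so Corollary \ref{matchWsequences} applies and tells us that the Whitehead sequence of $c$, viewed as a component of $X\cap T_{*}$, is the same with respect to both restricted sequences. Letting $n$ be the Bing-tree depth of $T$ in the $(M_i)$-tree and $m$ the Bing-tree depth of $T_{*}$ in the $(N_j)$-tree, this gives $s(c)_{n+k}=s(d)_{m+k}$ for all $k\geq 1$, which is the desired matching of tails.

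The main potential obstacle is bookkeeping: stages in $(M_i)$ can be Bing or Whitehead, whereas the Whitehead sequences are indexed by Bing-depth along the ray to $c$. Routing the argument through Corollary \ref{matchWsequences}, whose statement is already phrased in terms of Whitehead sequences rather than stage indices, handles this translation automatically; one only needs to verify that Lemma \ref{match stage} really places $T_{*}$ as an honest single component of some $N_K$, so that the restricted sequences genuinely share $T_{*}$ as a common initial torus — this is precisely the output of that lemma.
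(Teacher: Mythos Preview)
Your proposal is correct and follows essentially the same route as the paper: transport one defining sequence through $h$ so that both sequences define the same compactum, use Lemma \ref{match stage} to align a single torus stage, and then invoke Corollary \ref{matchWsequences} on the restricted sequences below that torus. The only cosmetic difference is that you pull $(N_j)$ back to $X$ via $h^{-1}$ whereas the paper pushes $(M_i)$ forward to $Y$ via $h$; your explicit handling of the Bing-depth bookkeeping and the observation that $\phi$ fixes $X$ (so $c\subset\phi(T)$ and Whitehead sequences are preserved) are details the paper leaves implicit.
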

\begin{proof}
By Corollary \ref{matchWsequences}, there is a homeomorphism of $S^{3}$, fixed on $Y$ that takes a component of some stage in the defining sequence $(h(M_i))$ for $h(c)$ to a component of some stage in the defining sequence $(N_j)$ for $d$. By viewing these stages as the initial stages of generalized BW constructions, and by applying Corollary \ref{matchWsequences}, one sees that the Whitehead sequence $s(h(c))$ with respect to $(h(M_i))$ matches up with the Whitehead sequence $s(d)$ with respect to $(N_j)$ past this stage. But the Whitehead sequence for $s(h(c))$ with respect to $(h(M_i))$ is the same as  the Whitehead sequence $s(c)$ with respect to $(M_i)$. The assertion follows.
\end{proof}

\section{Main Results}
\label{mainresultssection}

{\begin{const}\label{main construction}
{For every increasing sequence $s=(s_{1}.s_{2},s_{3},\ldots)$ of positive integers, we describe a construction of a BW compactum $C(s)$. 
The compactum $C(s)$  will be $BW(z_1, z_2, \dots)$ for a specific sequence $(z_1, z_2, \dots)$ associated with the sequence $s$. A subsequence $(n_{1},n_{2}\ldots)$ of the sequence of positive integers will be described inductively. For $i$ not in this subsequence,  $z_i$ is an ordered $2^i$-tuple of zeros.  For $i=n_{j}$ in this subsequence,  $z_{n_1}$ consists of the first $2^{n_1}$ terms in $s$, $z_{n_2}$ consists of the next $2^{n_2}$ terms in $s$, and so forth.  As before, let $w_i = \max(z_i)$ and $\sigma_i = w_1 + w_2 + \cdots w_i$.  To make sure that we can choose $C(s)$ to be a Cantor set, we just choose the $n_i$ appropriately as in Theorem \ref{Whiteheadlimit}.  Choose $n_1 = 1$, and for $i > 1$ choose $n_{i+1} = 2^{\sigma_i} +n_i$.  
See Figure \ref{construction}.}

\begin{figure}[ht]
\begin{center}
\includegraphics[width=.9\textwidth]{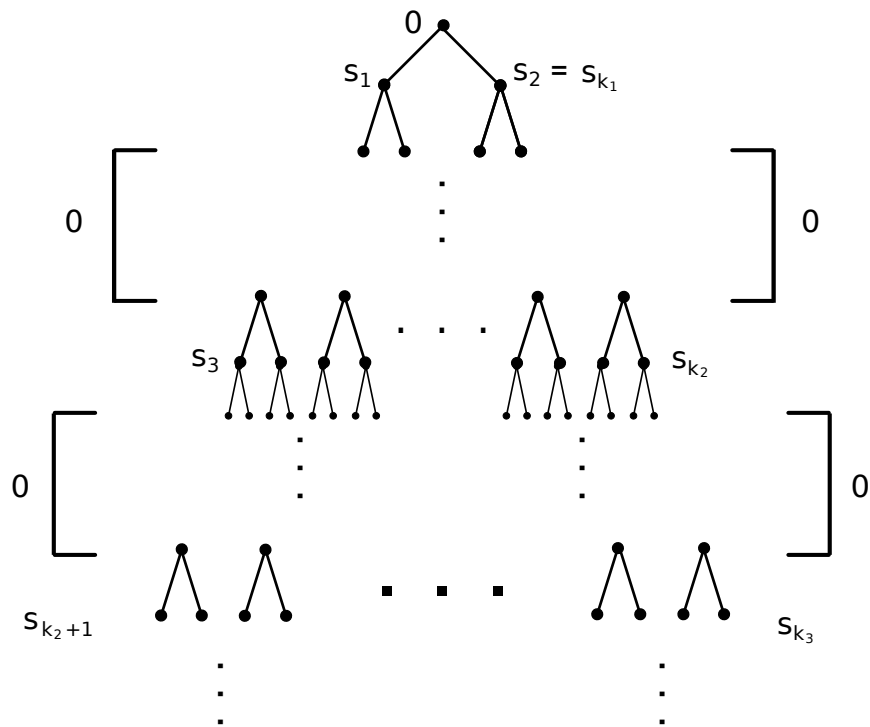}
\end{center}
\caption{Stages in Main Construction}
\label{construction}
\end{figure}

\end{const}

\begin{thm}\label{main theorem}
For every increasing sequence $s=(s_{1}.s_{2},s_{3},\ldots)$, the BW compactum $C(s)$  above can be constructed so as to be a rigid generalized BW Cantor set $C(s)\subset S^{3}$ with simply connected complement.
\end{thm}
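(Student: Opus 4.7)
The plan is to verify three separate assertions---$C(s)$ is a Cantor set, $S^{3}\setminus C(s)$ is simply connected, and $C(s)$ is rigidly embedded---by drawing on the machinery assembled in the previous sections. The first two are short; the heart of the matter is rigidity, which I plan to handle by combining Theorem~\ref{sametails} with the numerical features of Construction~\ref{main construction}.

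For the Cantor set claim, I will check directly that the hypothesis of Theorem~\ref{Whiteheadlimit} holds. Writing $t_j$ for the maximum entry of the $j$-th nonzero block $z_{n_j}$, we have $w_{n_j}=t_j$ and $w_i=0$ for $i\notin\{n_j\}$, so $\sigma_i=t_1+\cdots+t_j$ whenever $n_j\le i<n_{j+1}$. The recursive choice of $n_{j+1}$ makes this interval contain exactly $n_{j+1}-n_j=2^{\sigma_{n_j}}=2^{t_1+\cdots+t_j}$ integers, so each $j$ contributes $1$ to $\sum_i 2^{-\sigma_i}$ and the series diverges. Theorem~\ref{Whiteheadlimit} then gives that $C(s)$ is a Cantor set. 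The simply-connected-complement claim is immediate from the last bullet of Lemma~\ref{BW properties}.

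For rigidity, I will take a self-homeomorphism $h\colon S^3\to S^3$ with $h(C(s))=C(s)$, pick $c\in C(s)$, set $d=h(c)$, and aim to conclude $c=d$. Theorem~\ref{sametails}, applied with $X=Y=C(s)$ and the single defining sequence of $C(s)$, supplies integers $n,m$ with $s(c)_{n+k}=s(d)_{m+k}$ for all $k\ge 0$. The nonzero entries of either Whitehead sequence occur exactly at positions $n_1<n_2<\cdots$, and the consecutive gaps $n_{j+1}-n_j=2^{t_1+\cdots+t_j}$ form a \emph{strictly} increasing sequence because each $t_j\ge s_1\ge 1$. The main technical step is to compare the two matched tails: the ordered sets of shifted positions of nonzero entries must agree, forcing $n_{j_{0}+p+1}-n_{j_{0}+p}=n_{j_{0}'+p+1}-n_{j_{0}'+p}$ for every $p\ge 0$, where $n_{j_0}$, $n_{j_0'}$ are the first nonzero positions past $n$, $m$ respectively. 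Strict monotonicity of the gap sequence then forces $j_0=j_0'$ and in turn $n=m$.

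To finish, I will exploit the fact that $s$ is strictly increasing, so the $2^{n_j}$ entries of each block $z_{n_j}$ are pairwise distinct; the common value $s(c)_{n_j}=s(d)_{n_j}$ therefore identifies a unique vertex at depth $n_j$ of the BW tree, forcing $c$ and $d$ to lie in the same component of $M_{n_j}$ for all sufficiently large $j$. Since those components have diameter tending to zero (as $C(s)$ is a Cantor set), this yields $c=d$, whence $h$ fixes $C(s)$ pointwise and $C(s)$ is rigidly embedded. The delicate step I anticipate is the gap-shift argument ruling out a nontrivial tail shift; once that is handled, the pairwise distinctness of labels within each block makes the identification of vertices automatic.
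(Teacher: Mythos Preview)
Your proposal is correct and follows the same three-step outline as the paper's proof (Cantor set via Theorem~\ref{Whiteheadlimit}, simple connectivity via Lemma~\ref{BW properties}, rigidity via Theorem~\ref{sametails}). The paper's own argument for rigidity is a single sentence---``The construction guarantees that the Whitehead sequences $s(x)$ and $s(y)$ do not have the same tail''---and you are supplying the justification the paper omits.

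One remark on the rigidity step: your gap-comparison argument to force $n=m$ is valid, but there is a shorter route that may be what the authors had in mind and that sidesteps the gap analysis entirely. Any nonzero entry appearing in a Whitehead sequence is some term $s_k$ of the strictly increasing sequence $s$, and since the blocks $z_{n_1},z_{n_2},\ldots$ partition the terms of $s$, the value $s_k$ alone determines both the block index $j$ and the vertex at depth $n_j$. So if $s(c)_{n+k}=s(d)_{m+k}\neq 0$ for some $k$, say with $n+k=n_j$ and $m+k=n_{j'}$, equality of the values forces $j=j'$ immediately, hence $n=m$, and moreover identifies the vertex at depth $n_j$. This collapses your two phases (gap matching to get $n=m$, then label matching to identify vertices) into one. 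Either way the conclusion is the same; your version just takes an extra step.
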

\begin{proof}
The complement of the compactum $C(s)$ is simply connected by Lemma \ref{BW properties}.
{ It  is an easy matter to check that $\displaystyle \sum_{i=1}^\infty {1 \over 2^{\sigma_i} }= \infty$ and so it is possible to construct $C(s)$ to be a Cantor set by Theorem \ref{Whiteheadlimit}.}
 It remains to be shown that $C(s)$ is rigidly embedded. 

Let $h$ be a self homeomorphism of $S^{3}$ that takes $C(s)$ to itself. If $h$ restricted to $C(s)$ is not the identity, then there are points $x\neq y$ in $C(s)$ with $h(x)=y$. The construction guarantees that the Whitehead sequences $s(x)$ and $s(y)$ do not have the same tail, whereas 
Theorem \ref{sametails} requires that they do. This contradiction establishes the result.
\end{proof}

\begin{lem}\label{distinct rigid}If two sequences $s$ and $t$ with only finitely many terms in common are used to construct Cantor sets $C(s)$ and $C(t)$ as in Theorem \ref{main theorem}, then $C(s)$ and $C(t)$ are inequivalently embedded.
\end{lem}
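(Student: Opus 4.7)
My plan is to argue by contradiction, applying Theorem \ref{sametails} to reduce the claim to a purely combinatorial statement about $s$ and $t$. Suppose there were a self-homeomorphism $h$ of $S^{3}$ with $h(C(s))=C(t)$. Pick any component $x$ of $C(s)$; then $h(x)$ is a component of $C(t)$, and Theorem \ref{sametails} produces indices $n,m$ with $s(x)_{n+k}=s(h(x))_{m+k}$ for every $k\geq 0$, where $s(x)$ is the Whitehead sequence of $x$ with respect to the defining sequence constructed for $C(s)$ and $s(h(x))$ is the Whitehead sequence of $h(x)$ with respect to the defining sequence constructed for $C(t)$.

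The first step is to describe the entries of these Whitehead sequences. In Construction \ref{main construction} applied to $s$, the tuple $z_i$ is identically zero unless $i$ equals one of the distinguished indices $n_j$. At level $i=n_j$, the tuple $z_{n_j}$ is filled in order with the next $2^{n_j}$ consecutive terms of $s$, so the $x(n_j)$-th entry $w_{n_j,x(n_j)}$ is exactly one of the terms $s_\ell$. Since each $s_\ell\geq 1$, every such entry is nonzero, and since there are infinitely many levels $n_j$, the sequence $s(x)$ contains infinitely many nonzero entries, each a term of $s$. The identical description, with $t$ in place of $s$, governs the nonzero entries of $s(h(x))$.

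The second step combines these two observations. Term-by-term equality forces an entry on one side to be nonzero precisely when the corresponding entry on the other side is nonzero, and the two values then coincide. Since infinitely many shifted indices $n+k$ land at a level $n_j$ of the construction for $s$, we obtain infinitely many equalities of the form $s_\ell=t_{\ell'}$, exhibiting infinitely many common terms of $s$ and $t$. This contradicts the hypothesis that $s$ and $t$ share only finitely many terms and finishes the argument.

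The step I expect to require the most care is verifying that the ``same tails'' condition in Theorem \ref{sametails} really forces positional matching of the nonzero entries, rather than some weaker agreement up to rearrangement. Fortunately, Theorem \ref{sametails} is stated as literal equality $s(c)_{n+k}=s(d)_{m+k}$ after a shift, so this check is immediate, and the rest follows by unpacking the block structure placed on the defining sequence in Construction \ref{main construction}.
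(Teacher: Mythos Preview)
Your argument is correct and follows the same route as the paper: assume a homeomorphism exists, invoke Theorem~\ref{sametails} to force matching tails of Whitehead sequences, and derive a contradiction from the structure of Construction~\ref{main construction}. The paper compresses the last step into a single sentence (``the construction guarantees that no point in $C(s)$ has Whitehead sequence with the same tail as a point in $C(t)$''), whereas you have spelled out why---the nonzero entries of any Whitehead sequence for $C(s)$ are terms of $s$, lying in disjoint blocks at successive levels $n_j$ and hence distinct since $s$ is increasing, so a shared tail would produce infinitely many common values with $t$.
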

\begin{proof} Assume there is a homeomorphism of $S^{3}$ taking $C(s)$ to $C(t)$. The construction guarantees that no point in $C(s)$ has Whitehead sequence with the same tail as a point in $C(t)$, contradicting Lemma \ref{sametails}.
\end{proof}

\begin{thm}\label{maintheorem2}
There are uncountably many inequivalent BW Cantor sets in $S^{3}$ with simply connected complement.
\end{thm}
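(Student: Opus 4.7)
The plan is to reduce the theorem to Lemma \ref{distinct rigid} by producing an uncountable family of increasing sequences of positive integers that are pairwise almost disjoint (meaning any two share only finitely many terms). Given such a family $\{s^\alpha : \alpha \in A\}$ with $|A|$ uncountable, Theorem \ref{main theorem} yields for each $\alpha$ a rigid generalized BW Cantor set $C(s^\alpha) \subset S^3$ with simply connected complement, and Lemma \ref{distinct rigid} immediately shows that $C(s^\alpha)$ and $C(s^\beta)$ are inequivalently embedded whenever $\alpha \neq \beta$.

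The key step is therefore to exhibit an uncountable almost disjoint family of infinite subsets of $\mathbb{N}$, which is a classical construction. One concrete way: fix a bijection $\mathbb{N} \to \mathbb{Q}$, and for each real number $\alpha \in \mathbb{R}$ choose an injective sequence of rationals converging to $\alpha$; pulling back under the bijection gives an infinite subset $A_\alpha \subset \mathbb{N}$, and $A_\alpha \cap A_\beta$ must be finite for $\alpha \neq \beta$ (otherwise two distinct reals would be limits of the same sequence). Alternatively, identify $\mathbb{N}$ with the set of nodes of the infinite binary tree $2^{<\omega}$, and to each infinite branch $\sigma \in 2^\omega$ associate the set $B_\sigma$ of its initial segments: two distinct branches share only the initial segments preceding their first point of disagreement, so the $B_\sigma$ form an uncountable almost disjoint family.

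For each chosen infinite subset, list its elements in increasing order to obtain an increasing sequence of positive integers $s^\alpha = (s_1^\alpha, s_2^\alpha, \ldots)$. Because the underlying subsets are almost disjoint, the resulting sequences pairwise share only finitely many terms, so Lemma \ref{distinct rigid} applies.

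The proof has no serious obstacle beyond invoking the almost disjoint family construction: the geometric and topological content (rigidity, simple connectivity, and the equivalence criterion via matching Whitehead-sequence tails) has already been packaged into Theorem \ref{main theorem} and Lemma \ref{distinct rigid}. The only mild subtlety is that Lemma \ref{distinct rigid} requires the two sequences to have \emph{only finitely many terms in common} as values, which is exactly what almost disjointness of the underlying sets delivers.
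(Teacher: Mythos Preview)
Your proposal is correct and follows essentially the same approach as the paper: reduce to Lemma~\ref{distinct rigid} and then exhibit an uncountable almost disjoint family of increasing sequences of positive integers. In fact, the paper uses precisely your second construction, labeling the vertices of the infinite binary tree by positive integers and taking the sequences along descending rays.
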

\begin{proof} This follows from Lemma \ref{distinct rigid} and the fact that there are uncountably many increasing sequences of positive integers, any two of which have only finitely many terms in common. One easy way to see this is to sequentially label the vertices of an infinite binary tree
with the positive integers, and choose the sequences corresponding to descending {rays} through the tree.
\end{proof}

\section{Application to $3$-manifolds}
\label{3manifoldsection}

\begin{thm} \label{maintheorem3} For every generalized BW Cantor set $C(s)$ as in Theorem \ref{main theorem}, there is an open 3-manifold $M(s)$ with end set $C(s)$ with the following properties:
\begin{itemize}
\item $M(s)$ is simply connected,
\item The Freudenthal (endpoint) compactification of $M(s)$ is $S^{3}$,
\item Any self homeomorphism $h$ of $M(s)$ extends to a unique homeomorphism $\overline{h}$ of $S^{3}$, 
\item The homeomorphism $\overline{h}$ restricted to the ends is the identity, and
\item $M(s)$ has genus one at infinity.
\end{itemize}
 \end{thm}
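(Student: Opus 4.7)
The plan is to take $M(s)=S^{3}\setminus C(s)$ and verify the five listed properties in turn. Simple connectivity is immediate from the last item of Lemma \ref{BW properties}. For the end structure, the sets $K_{i}=S^{3}\setminus\mathrm{Int}(M_{i})$ are compact subsets of $M(s)$ forming an exhaustion, and the components of $M_{i}\setminus C(s)$ form a neighborhood basis at infinity. Because $C(s)$ is a Cantor set (so that every descending nested chain of components of successive $M_{i}$ meets in a single point of $C(s)$), there is a canonical bijection between the ends of $M(s)$ and the points of $C(s)$; a standard check of the topologies then identifies the Freudenthal compactification of $M(s)$ with $S^{3}$.

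For the homeomorphism extension statements, any self homeomorphism $h$ of $M(s)$ extends by functoriality of the end compactification to a homeomorphism $\overline{h}$ of $S^{3}$, with uniqueness following from density of $M(s)$ in $S^{3}$. The restriction $\overline{h}|_{C(s)}$ is then a self homeomorphism of $C(s)$ that extends to $S^{3}$, and rigidity from Theorem \ref{main theorem} forces $\overline{h}|_{C(s)}$ to be the identity. For the final item, the same exhaustion $\{K_{i}\}$ has only torus boundary components in $M(s)$ (since each component of $M_{i}$ is a solid torus throughout the BW construction), so the genus at infinity is at most one; it is exactly one because $C(s)$ is wild — tameness would force every self homeomorphism of $C(s)$ to extend to $S^{3}$, and since the Cantor set admits nontrivial self homeomorphisms this would contradict the rigidity established in Theorem \ref{main theorem}.

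The main obstacle is the careful identification of the Freudenthal compactification with $S^{3}$: one must arrange in the construction that the diameters of the components of $M_{i}$ tend to zero so that descending chains of components converge to the points of $C(s)$, and one must verify that the topology on the end compactification at the added ideal endpoints agrees with the subspace topology of $S^{3}$ at the corresponding points of $C(s)$. Both are routine given standard facts about defining sequences of Cantor sets, but they form the bridge between the geometric BW construction and the topological conclusion; once this identification is in place, the remaining items reduce to applications of Theorem \ref{main theorem} and of the fact that the BW construction uses only solid tori.
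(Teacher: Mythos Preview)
Your proof is correct and follows the same overall route as the paper: set $M(s)=S^{3}\setminus C(s)$, read off simple connectivity from Lemma~\ref{BW properties}, identify the Freudenthal compactification with $S^{3}$ via the defining sequence, extend homeomorphisms by functoriality of the end compactification, and use rigidity from Theorem~\ref{main theorem} for the identity-on-ends statement.

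The one genuine difference worth noting is in the lower bound for the genus. The paper argues directly from the construction: since the sequence $s$ consists of positive integers, every branch of the BW tree picks up infinitely many Whitehead stages, so no point of $C(s)$ is tame, and hence the (local and global) genus is at least one. You instead derive wildness indirectly from rigidity: a tame Cantor set has the strong homeomorphism extension property, which would contradict Theorem~\ref{main theorem}. Your argument is slick and self-contained, but as stated it only shows that $C(s)$ is wild \emph{somewhere}, hence that the global genus at infinity is at least one; it does not by itself give local genus one at every end, which is the slightly sharper conclusion the paper asserts. If you want that stronger pointwise statement, you should supplement your argument with the observation that each point of $C(s)$ sees infinitely many Whitehead stages in its defining chain.
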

\begin{proof}Given $C(s)$ as in Theorem \ref{main theorem}, let $M(s)$ be the complement of $C(s)$ in $S^{3}$. Theorem \ref{main theorem} shows that $M(s)$ is simply connected. The definition of $M(s)$ shows that the end set is $C(s)$ and that the Freudenthal (endpoint) compactification of $M(s)$ is $S^{3}$. Standard results about endpoint compactifications show that
any self homeomorphism $h$ of $M(s)$ extends to a unique homeomorphism $\overline{h}$ of $S^{3}$. The extension homeomorphism $\overline{h}$ must be the identity when restricted to the ends since $C(s)$ is rigidly embedded. The genus at infinity of $M(s)$ is the same as the genus of $C(s)$ which is one since $C(s)$ has a defining sequence with genus one components, and since no point of $C(s)$ is tamely embedded.
\end{proof}
\begin{cor} There are uncountably many distinct 3-manifolds having the properties listed in Theorem \ref{maintheorem3}.
\end{cor}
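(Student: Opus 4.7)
The plan is to combine Theorem \ref{maintheorem2}, which produces uncountably many inequivalently embedded Cantor sets $C(s)$, with the Freudenthal compactification machinery already invoked in the proof of Theorem \ref{maintheorem3}. For each increasing sequence $s$, let $M(s)$ be the open 3-manifold associated with $C(s)$ as in Theorem \ref{maintheorem3}; I want to show that the assignment $s \mapsto M(s)$, restricted to a suitable uncountable family of sequences, yields pairwise non-homeomorphic manifolds.

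First I would fix an uncountable collection $\mathcal{S}$ of increasing sequences of positive integers, any two of which share only finitely many terms; as noted in the proof of Theorem \ref{maintheorem2}, such a collection exists (e.g.\ take the sequences corresponding to descending rays in an infinite binary tree whose vertices are bijectively labeled by $\mathbb{Z}^{+}$). By Lemma \ref{distinct rigid}, for distinct $s, t \in \mathcal{S}$ the Cantor sets $C(s)$ and $C(t)$ are inequivalently embedded in $S^{3}$.

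Next I would show that if $s, t \in \mathcal{S}$ and $\varphi \colon M(s) \to M(t)$ is a homeomorphism, then $C(s)$ and $C(t)$ are equivalently embedded, contradicting the previous step whenever $s \neq t$. To do this, observe that the Freudenthal compactification is a functorial invariant of an open manifold: $\varphi$ extends uniquely to a homeomorphism $\overline{\varphi}$ between the Freudenthal compactifications of $M(s)$ and $M(t)$, both of which are $S^{3}$ by Theorem \ref{maintheorem3}. Under this identification, the ends of $M(s)$ and $M(t)$ are precisely the Cantor sets $C(s)$ and $C(t)$, so $\overline{\varphi}$ is a self-homeomorphism of $S^{3}$ carrying $C(s)$ onto $C(t)$. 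This is exactly the assertion that $C(s)$ and $C(t)$ are equivalently embedded.

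Combining the two previous steps, distinct elements of $\mathcal{S}$ yield non-homeomorphic manifolds $M(s)$, so the family $\{M(s) : s \in \mathcal{S}\}$ is an uncountable collection of pairwise non-homeomorphic 3-manifolds, each satisfying the properties of Theorem \ref{maintheorem3}. I do not anticipate any real obstacle here: the only subtlety is the functoriality of the Freudenthal compactification, but this is a classical fact (see \cite{Fr42, Di68, Si65}, cited in the preliminaries) and is already implicit in the uniqueness of the extension $\overline{h}$ used in Theorem \ref{maintheorem3}.
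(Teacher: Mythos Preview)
Your proposal is correct and follows essentially the same route as the paper: the paper's proof simply says the result follows from Theorem~\ref{maintheorem2} since a homeomorphism $M(s)\to M(t)$ would force $C(s)$ and $C(t)$ to be equivalently embedded. You have just spelled out the Freudenthal-compactification step that justifies this implication, which the paper leaves implicit.
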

\begin{proof} This follows directly from Theorem \ref{maintheorem2} since a homeomorphism between $M(s)$ and $M(t)$ implies equivalence of embeddings of $C(s)$ and $C(t)$.
\end{proof}

\section{Questions}
\label{question section}
The following questions arise from a consideration of the results in this paper.

\begin{que}
For the  simply connected 3-manifolds constructed in this paper, each with end set a Cantor set, is every self homeomorphism isotopic either to an involution fixing the ends or to the identity?
\end{que}
\begin{que}
Does there exist a simply connected 3-manifold with only one end so that every self homeomorphism is isotopic either to an involution fixing the ends or  to the identity?
\end{que}
\begin{que}
Can the techniques of this paper be used to show there exist rigid embeddings of compacta in $S^3$ with simply connected complement?
\end{que}
\begin{que}
What is the group of isotopy classes of self homeomorphisms of the Whitehead 3-manifold? The expected answer would be  $\mathbb Z \times \mathbb Z_2$, where the first factor comes from ratcheting \cite{Wr92} and the second factor from flipping.
\end{que}


\section{Acknowledgments}
The first author was supported in part by the National Science Foundation grant DMS0852030. The first and second authors were supported in part by the Slovenian Research Agency grant BI-US/11-12-023. 
The first and third authors were supported in part by the National Science Foundation grant DMS1005906. 
The second author was supported in part by the Slovenian Research Agency grants  P1-0292-0101 and J1-2057-0101.  The authors would also like to thank the referees for helpful comments and suggestions.

\bibliographystyle{amsalpha}
\bibliography{BW}

\end{document}